\newfont{\aj}{eufm10 at12pt}
\newfont{\ajk}{eufm10 at10pt}
\theoremstyle{plain}
\newtheorem{theorem}{Theorem}[section]
\newtheorem{lemma}[theorem]{Lemma}
\newtheorem{corollary}[theorem]{Corollary}
\theoremstyle{definition}
\newtheorem{definition}[theorem]{Definition}
\newtheorem{example}[theorem]{Example}
\newtheorem{remark}[theorem]{Remark}
\numberwithin{equation}{section}
\begin{document}
%\noindent {\footnotesize\tiny Extended Abstracts of the 42$^{\rm
%nd}$ Annual Iranian Mathematics Conference\\
%5-8 September 2011, Vali-e-Asr University of  Rafsanjan, Iran, pp 00-00}\\[1.00in]
\title[Chain Mixing and Chain Recurrent Iterated Function Systems]{ Chain
Mixing and Chain Recurrent Iterated Function Systems}
\author[Mehdi Fatehi Nia]{ {{\bf {Mehdi Fatehi Nia}}
\\{\tiny{Department of Mathematics, Yazd University, 89195-741 Yazd, Iran}
\\e-mail: fatehiniam@yazd.ac.ir }}}
 %\maketitle
 %\title[Iterated Function Systems with the Shadowing Property]
%\author [MEHDI FATEHI NIA ] {{MEHDI FATEHI NIA }
%\address{  Department of Mathematics,
%Yazd University, P. O. Box 89195-741 Yazd, Iran}{ \\ \tiny{\textrm{ e-mail:fatehiniam@yazd.ac.ir}}}}
%\indent $^{*}$ Speaker
%\title[]
%\begin{minipage}{15cm}
%\begin{center}
%{\textbf{Iterated function systems with the shadowing property}}\\
%\end{center}
%\end{minipage}
%\vspace{10mm}
%\begin{center}
%{Mehdi Fatehi Nia }\\
%{\small Department of Mathematics,
%Yazd University, P. O. Box 89195-741 Yazd, Iran \\
%E-mail: {fatehiniam@yazd.ac.ir}
%\vspace{0.2cm}}
%\end{center}
\maketitle
\vspace{5mm}
\begin{abstract}
This paper considers the egodicity properties in iterated function systems. First, we will introduce chain mixing and chain transitive iterated function systems  then some results and examples are presented to compare with these notions in discrete dynamical systems. As our main result, using adding machine maps and topological conjugacy we show that chain mixing, chain transitive and chain recurrence properties in iterated function systems are equivalent.
\end{abstract}
\emph{keywords}: { Adding machine maps, chain mixing, chain recurrent, chain transitive, iterated function systems, shadowing property}\\
\emph{subjclass}[2010]{ 37C50,37C15}
\section{Introduction}
Let $(X,d)$ be a compact Hausdorff metric space. Let us recall that an \emph{ Iterated Function System(IFS)} $\mathcal{F}=\{X; f_{\lambda}|\lambda\in\Lambda\}$ is any family of continuous mappings $f_{\lambda}:X\rightarrow X,~\lambda\in \Lambda$, where $\Lambda$ is a finite nonempty set (see\cite{[GG]}).\\ Let $T=\mathbb{Z}$ or $T=\mathbb{Z}_{+}= \{n\in \mathbb{Z}:n\geq 1\}$ and $\Lambda^{T}$ denote the set
of all infinite sequences $\{\lambda_{i}\}_{i\in T}$ of symbols belonging to $\Lambda$. A typical element of $\Lambda^{\mathbb{Z}_{+}}$
 can be denoted as $\sigma= \{\lambda_{1},\lambda_{2},...\}$ and we use the shorted notation $$\mathcal{F}_{\sigma_{n}}=f_{\lambda_{n}} o f_{\lambda_{n-1}}o ...o f_{\lambda_{1}}.$$ A sequence $\{x_{n}\}_{n\in T}$ in $X$ is called an orbit of the \textbf{IFS} $\mathcal{F}$ if there exists $\sigma\in \Lambda^{T}$ such that $x_{n+1}=f_{\lambda_{n}}(x_{n})$, for each $\lambda_{n}\in \sigma$.\\Many  notions in
dynamics like attractors, minimality, transitivity, and shadowing can be extended to
IFS (see \cite{[BV],[BVA],[MF],[GG1]}).\\
Topological mixing and topological transitivity are two important notions in discrete dynamical system. In \cite{[AK]}, Akin E. gave the definition of topological ergodicity, and prove that topological mixing implied topological ergodicity and topological ergodicity implied topological transitivity.
 Many authors  study different properties of topological ergodicity and their applications in other areas\cite{[M],[RW],[KS]}. For example  chain recurrence and  chain transitivity have applications in
the study of  epidemiology, game theory, economics, and mathematical biology (see \cite{[M]} for more details). In \cite{[RW]}, Richeson and Wiseman prove that for every continuous map on a connected space, chain mixing,
chain transitivity and chain recurrent properties are equivalent. Up to this point, they prove that the growth of the chain mixing
times give some information about topological entropy.
%Our work can be viewed as an extension of that found in \cite{[BK],[RW]}.
The main goal of this paper is to extend the following result to iterated function systems:
\begin{theorem}\label{taa}\cite{[RW]}
Let $X$ be connected and $f:X\rightarrow X$ be continuous. Then the following are equivalent:\\
$(1)~f$ is chain recurrent. \\
$(2)~f$ is chain transitive.\\
$(3)~f$ is totally chain transitive.\\
$(4)~f$ is chain mixing.
\end{theorem}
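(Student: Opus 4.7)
The plan is to close the cycle of implications, pushing all the work into the hardest direction. The easy arrows $(4)\Rightarrow(3)\Rightarrow(2)\Rightarrow(1)$ are essentially free: $(3)\Rightarrow(2)\Rightarrow(1)$ follow by specializing $n=1$ and $y=x$, and $(4)\Rightarrow(3)$ uses uniform continuity of $f,f^{2},\ldots,f^{n-1}$ on the compact space $X$ to convert a sufficiently long $\delta$-chain for $f$ (of length $mn$) into an $\epsilon$-chain for $f^{n}$ (of length $m$) by subsampling every $n$-th vertex. The substantive content lies in $(1)\Rightarrow(4)$, where the hypothesis that $X$ is connected enters essentially; I would split this into $(1)\Rightarrow(2)$ and $(2)\Rightarrow(4)$.

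For $(1)\Rightarrow(2)$, fix $\epsilon>0$ and declare $x\sim_{\epsilon}y$ iff there exist $\epsilon$-chains both from $x$ to $y$ and from $y$ to $x$. Chain recurrence of $f$ supplies reflexivity, and symmetry and transitivity are immediate. Each equivalence class is \emph{open}: if $y$ lies in the class of $x$ via a return loop $y=y_{0},\ldots,y_{m}=y$, then for any $y'$ sufficiently close to $y$ the perturbed sequences $y',y_{1},\ldots,y_{m}$ and $y_{0},\ldots,y_{m-1},y'$ are still $\epsilon$-chains, thanks to continuity of $f$ at $y$, so $y'\sim_{\epsilon}y\sim_{\epsilon}x$. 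The classes partition $X$ into open sets, so connectedness forces a single class, i.e.\ chain transitivity.

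For $(2)\Rightarrow(4)$, fix $\epsilon>0$ and $x\in X$, and let $L_{\epsilon}(x)$ be the set of lengths of $\epsilon$-chains from $x$ to $x$. Concatenation of loops makes $L_{\epsilon}(x)$ a subsemigroup of $\mathbb{Z}_{+}$, so it contains every sufficiently large multiple of $d:=\gcd L_{\epsilon}(x)$. I claim $d=1$. Assign each $y\in X$ the residue modulo $d$ of the length of any $\epsilon$-chain from $x$ to $y$; this is well defined because two such chain lengths $\ell_{1},\ell_{2}$ combined with a return chain $y\to x$ (provided by chain transitivity) of length $r$ produce cycles at $x$ of lengths $\ell_{1}+r,\ell_{2}+r\in L_{\epsilon}(x)$, forcing $d\mid\ell_{1}-\ell_{2}$. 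This yields a partition $X=P_{0}\sqcup\cdots\sqcup P_{d-1}$ into open sets (by the same perturbation argument as before). If $d>1$ the partition is nontrivial, since $x\in P_{0}$ while $f(x)\in P_{1}$ via the length-one chain $x,f(x)$, contradicting connectedness. Hence $d=1$, and for an arbitrary target $y$ one concatenates a fixed $\epsilon$-chain $x\to y$ of length $m$ with all sufficiently long loops at $y$ (provided by running the same argument at $y$) to realize every sufficiently large integer as a chain length from $x$ to $y$. This is chain mixing.

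The main obstacle I anticipate is isolating and consistently applying a single chain-perturbation lemma: openness of the $\sim_{\epsilon}$-classes, openness of the residue classes $P_{i}$, and the subsampling step in $(4)\Rightarrow(3)$ all reduce to the observation that an $\epsilon$-chain's constraints at a given vertex are finite in number and continuous in that vertex, so a small perturbation can be absorbed using uniform continuity of $f$. Once that routine work is factored out cleanly, connectedness of $X$ does all the global lifting in both halves of the hard direction.
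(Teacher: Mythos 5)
Your proposal is correct in substance and, once the paper's machinery is unwound, follows essentially the same skeleton. Note that the paper does not prove this theorem itself — it quotes it from Richeson–Wiseman — but it proves the IFS generalization (Theorem \ref{ca}) via Lemma \ref{lea} and Theorem \ref{main}, and that is the natural benchmark. Your $(1)\Rightarrow(2)$ is exactly the paper's Lemma \ref{lea}: the $\epsilon$-chain-equivalence classes are open (chain recurrence supplying reflexivity), so connectedness forces a single class. Where you genuinely diverge is in $(2)\Rightarrow(4)$. The paper develops a structure theorem valid on arbitrary compact spaces: $k_{\epsilon}=\gcd$ of return-chain lengths, $k_{\epsilon}$ clopen classes cyclically permuted, and a dichotomy — either $k_{\epsilon}$ stabilizes (giving chain mixing of the $k$-th power on each class) or grows without bound (adding-machine factor) — and only afterwards invokes connectedness to collapse both alternatives. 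You instead inject connectedness at each fixed scale $\epsilon$ to force $d=\gcd L_{\epsilon}(x)=1$ immediately, since the residue classes $P_{0},\dots,P_{d-1}$ would otherwise form a nontrivial clopen partition; this bypasses the adding-machine case entirely and is shorter for the connected setting, at the price of saying nothing about the disconnected case, which is the paper's real interest. Your $(4)\Rightarrow(3)$ subsampling argument is also an honest proof of an implication the paper dismisses as clear by definition.

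One point to tighten: the definition of chain mixing in force here requires a single $N$ working for \emph{all} pairs $(x,y)$ simultaneously, whereas your concatenation at the end produces a threshold $m+N_{y}$ depending on the pair. This is repaired by the same perturbation lemma you already isolate — run the argument with $\epsilon/2$ on a finite $\epsilon/2$-net of the compact space $X$, take the maximum of the finitely many thresholds, and perturb the two endpoints of the resulting chains — but as written the uniformity is not addressed and should be.
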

For this purpose, firstly, we define the notions chain mixing and chain transitive iterated function systems.
Then by some lemmas and examples we present some results about these notions on iterated function systems.
Lemma \ref{led} is one of the main results of this paper, this lemma present some conditions for an \textbf{IFS}
to be topologically conjugated with an adding machine map.  In Theorem \ref{main} we prove that if $\mathcal{F}$ is chain transitive then for some $k>1$,$\mathcal{F}^{k}$ restricted to some invariant subsets  is chain mixing; or
 $\mathcal{F}$ factors to an adding machine map.
As a corollary we have Theorem \ref{ca} which is a generalization of  Theorem \ref{taa} for iterated function systems. Finally
we consider some results that are related to this theorem.
%Yang Runsheng have studied the properties of topological ergodicity in [2]and [3] and its relationship with chaos,he pointed out that topological ergodicity
%had better properties than topological transitivity and got a series of results.This thesis goes on to study the properties of topological ergodicity and
%topological double ergodicity following ancestor's work.Chain mixing and chain transitivity are also important study objects of dynamical system,with the
%enlightenment of the definition of topological ergodicity,the defination of chain ergodicity is given in this thesis,then we discuss its relationship with
%topological ergodicity as well as topological transitivity etc.Just like the definition of topological ergodicity, chain ergodicity is the definition between
%chain mixing and chain transitivity used to describe the properties of dynamical system.Imitating the method used to study topological ergodicity,this thesis
%studies the properties of chain ergodicity on inverse limit space,and give the sufficient and necessary condition when a continuous map on compact space is chain
%ergodicity and sensitive.At last,this article introduces the definition that the super space mapping f is induced from the napping f of metric space and depicts an
%equal portriat of the mapping f of the topological ergodicity. -
%See more at: http://www.research-degree-thesis.com/science-a/mathematics/106054.html#sthash.k34DJtea.dpuf
\section{preliminaries and definitions}
 At first we establish some notations.\\ Given $\delta>0$, a sequence $\{x_{n}\}_{n\in T}$ in $X$ is called a $\delta-$chain of $\mathcal{F}$ if there exists $\sigma\in\Lambda^{T}$ such that for every $\lambda_{n}\in \sigma$, we have $d(x_{n+1},f_{\lambda_{n}}(x_{n}))<\delta$.\\
A point $x\in X$ is chain recurrent if for every $\epsilon>0$, there is an $\epsilon-$chain from $x$ to itself \cite{[BWL]}.
\\
An IFS $\mathcal{F}$ is chain transitive if for every $x,y\in X$ and every $\epsilon>0$ there is an $\epsilon-$chain from $x$ to $y$.\\
% The iterated function system $\mathcal{F}=\{X; f_{\lambda}|\lambda\in\Lambda\} $ is minimal if each closed subset $A \subset X$ such that
% $f_{\lambda}(A) \subset A$ for all $\lambda\in\Lambda$, is empty or coincides with $X$.\\
%Equivalently, for a minimal iterated function system $\mathcal{F}=\{X; f_{\lambda}|\lambda\in\Lambda\} $, for any $x \in X$ the collection
%of iterates $f_{\lambda_{1}} o ... o f_{\lambda_{k}} (x),~$ $k>0$ and $\lambda_{1},...,\lambda_{k}\in\Lambda$, is
%dense in $X$ \cite{[R]}.
 \\One says that the \textbf{IFS} $ \mathcal{F}$ has the \emph{shadowing property }(on $T$) if, given $\epsilon>0$, there exists $\delta>0$ such that for any $\delta-$pseudo orbit $\{x_{n}\}_{n\in T}$ there exists an orbit $\{y_{n}\}_{n\in T}$, satisfying the inequality $d(x_{n},y_{n})\leq \epsilon$ for all $n\in T$. In this case one says that the $\{y_{n}\}_{n\in T}$ or the point $y_{0}$, $\epsilon-$ shadows the $\delta-$pseudo orbit $\{x_{n}\}_{n\in T}$.\\
Please note that if $\Lambda$ is a set with one member then the \textbf{IFS} $ \mathcal{F}$ is an ordinary discrete dynamical system \cite{[GG]}.
In this case the shadowing property for $\mathcal{F}$ is ordinary shadowing property for a discrete dynamical system.

\begin{definition}
Suppose $(X,d)$ and $(Y,d^{'})$ are compact metric spaces and $\Lambda$ is a finite set. Let $ \mathcal{F}=\{X; f_{\lambda}|\lambda\in\Lambda\}$ and $ \mathcal{G}=\{Y; g_{\lambda}|\lambda\in\Lambda\}$ are two $IFS$ which $f_{\lambda}:X\rightarrow X$ and  $g_{\lambda}:Y\rightarrow Y$ are continuous maps for all $\lambda\in\Lambda$. We say that $ \mathcal{F}$ is  topologically conjugate to $ \mathcal{G}$  if there is a homeomorphism  $h:X\rightarrow Y$  such that $g_{\lambda}=h o f_{\lambda} o h^{-1}$, for all $\lambda\in\Lambda$.\end{definition}

%\begin{definition}
%A point $x\in X$ is non-wandering for $\mathcal{F}$, if for any neighborhood $U\subset X$ there exists $\sigma\in \Lambda^{\mathbb{Z}_{+}}$ such that
% $\mathcal{F}_{\sigma_{n}}(U)\cap U\neq\emptyset$, for some $n>0$
%\end{definition}
%The IFS $\mathcal{F}$ is non-wandering if every point of $X$ is non-wandering.
\begin{definition}
The IFS $\mathcal{F}$ is topologically transitive if for every nonempty sets $U,V\subset X$, there is $\sigma\in \Lambda^{\mathbb{Z}_{+}}$ such that $\mathcal{F}_{\sigma_{n}}(U)\cap V\neq\emptyset$, for some $n\geq 0$
\end{definition}
\begin{definition}
The IFS $\mathcal{F}$ is topologically mixing if for every nonempty sets $U,V\subset X$, there is $N>0$ such that for all $n>N$ we can find $\sigma\in \Lambda^{\mathbb{Z}_{+}}$ which $\mathcal{F}_{\sigma_{n}}(U)\cap V\neq\emptyset$.
\end{definition}

Let $n>0$ be an integer. Set $\mathcal{F}^{n}= \{ g_{\mu}|\mu\in\Pi\}=\{f_{\lambda_{k}}o ...of_{\lambda_{1}}|\lambda_{1},...,\lambda_{k}\in\Lambda\}$. \\
We say that $\mathcal{F}$ is totally chain transitive if $\mathcal{F}^{n}$ is chain transitive, for all $n\geq 1$.
\begin{definition}
We say that $\mathcal{F}$  is $\epsilon-$chain mixing, if there is an $N>0$ such that for every  $x,y\in X$ and $n>N$ there is an $\epsilon-$chain from $x$ to $y$ of length exactly $n$.
\end{definition}
The IFS $\mathcal{F}$ is chain mixing if it is $\epsilon-$chain mixing, for every $\epsilon>0$.
\section{ergodicity in iterated function  systems}
In this section we will recall and extend some well-known general results in discrete dynamical systems for \textbf{IFS}s. Also, we give two non trivial examples of \textbf{IFS}s that are chain mixing. \begin{lemma}\label{le}
Suppose that the IFS $\mathcal{F}$ has the shadowing property then $\mathcal{F}$ is topologically mixing if and only if it is chain mixing.
\end{lemma}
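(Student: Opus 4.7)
The plan is to prove the two implications separately. Shadowing is needed only for the direction from chain mixing to topological mixing; the converse follows from uniform continuity plus a finite-cover argument.

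For topologically mixing implies chain mixing, I would fix $\epsilon > 0$, exploit the finiteness of $\Lambda$ and uniform continuity on the compact space $X$ to choose $\delta \in (0,\epsilon)$ with $d(p,q) < \delta$ implying $d(f_\lambda(p), f_\lambda(q)) < \epsilon$ for every $\lambda$, and cover $X$ by finitely many open balls $B_1, \ldots, B_k$ of radius $\delta/2$. Applying topological mixing to each ordered pair $(B_i, B_j)$ produces integers $N_{ij}$; I take $N = \max_{i,j} N_{ij}$. For $x \in B_i$, $y \in B_j$ and $n > N$, topological mixing supplies $z \in B_i$ and $\sigma$ with $\mathcal{F}_{\sigma_n}(z) \in B_j$. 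The sequence $x_0 = x$, $x_k = \mathcal{F}_{\sigma_k}(z)$ for $1 \leq k \leq n-1$, $x_n = y$ is then the required $\epsilon$-chain: the interior jumps are exactly zero, the opening jump $d(f_{\lambda_1}(z), f_{\lambda_1}(x))$ is less than $\epsilon$ by uniform continuity since $d(z,x) < \delta$, and the closing jump $d(y, \mathcal{F}_{\sigma_n}(z)) < \delta$ stays inside $B_j$.

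For chain mixing plus shadowing implies topologically mixing, take nonempty open $U, V$, pick $x \in U$, $y \in V$ and $\epsilon > 0$ with $\overline{B}(x,\epsilon) \subset U$ and $\overline{B}(y,\epsilon) \subset V$. Shadowing produces $\delta > 0$, and chain mixing then delivers $N$ so that for each $n > N$ there is a $\delta$-chain $x = x_0, x_1, \ldots, x_n = y$ with symbols $\lambda_1, \ldots, \lambda_n$. I would extend this to an infinite $\delta$-pseudo orbit by choosing any continuation $\lambda_{n+1}, \lambda_{n+2}, \ldots$ and iterating, which introduces no further error. Shadowing then yields an orbit $\{y_k\}$ with associated $\sigma' \in \Lambda^{\mathbb{Z}_+}$ and $d(x_k, y_k) \leq \epsilon$; since $y_0 \in U$ and $y_n = \mathcal{F}_{\sigma'_n}(y_0) \in V$, this gives $\mathcal{F}_{\sigma'_n}(U) \cap V \neq \emptyset$ for every $n > N$, which is topological mixing.

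Neither direction hides a conceptually deep obstacle. The two cosmetic nuisances are matching the prescribed endpoints in the constructed chain (solved by the uniform-continuity buffer at the two ends) and promoting a finite $\delta$-chain into an infinite $\delta$-pseudo orbit eligible for the shadowing hypothesis (solved by an arbitrary error-free tail extension). With these in place, the argument parallels the classical one-map proof, translated into the symbol-sequence formalism of IFS used throughout the paper.
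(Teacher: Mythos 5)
Your proof is correct and follows essentially the same route as the paper: topological mixing applied to small balls yields chains, and shadowing promotes a $\delta$-chain to a genuine orbit landing in $U$ and $V$. If anything, your forward direction is more careful than the paper's, since the finite cover together with the uniform-continuity buffer at the two endpoints produces a single $N$ valid for all pairs $x,y$ and chains of length exactly $n$, whereas the paper's argument yields an $N$ depending on the chosen points and a chain of length $n+2$.
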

\begin{proof}
Let $\mathcal{F}$ be topologically mixing. Given $\epsilon>0$ and $a,~b\in X$. So, there exists $N>0$ such that for every $n>N$ we can find $\sigma\in \Lambda^{\mathbb{Z}_{+}}$, which $\mathcal{F}_{\sigma_{n}}(B_{\epsilon}(a))\cap B_{\epsilon}(b)\neq\emptyset.$ Then, there is a chain of  length $n$ from a point in $B_{\epsilon}(a)$ to a point in $B_{\epsilon}(b)$. Thus, there is an $\epsilon-$chain from $a$ to $b$ of  length $n+2$.
\\
Conversely, suppose that $\mathcal{F}$ is chain mixing and has the shadowing property. Assume that $U$ and $V$ are two arbitrary non-empty sets. Take $a\in U$ and $b\in V$ and $\epsilon>0$ such that $B_{\epsilon}(a)\subset U$ and $B_{\epsilon}(b)\subset V$. Consider $\delta>0$ as $\epsilon$ modulus of shadowing. Since $\mathcal{F}$ is chain mixing, there exists $N>0$ such that for every $n>N$, we can find $\sigma\in \Lambda^{\mathbb{Z}_{+}}$ which $x, \mathcal{F}_{\sigma_{1}}(x),...,\mathcal{F}_{\sigma_{n}}(x),y$ is a $\delta-$chain from $x$ to $y$. This implies that, for every $n>N$, we can find $\sigma\in \Lambda^{\mathbb{Z}_{+}}$ which $\mathcal{F}_{\sigma_{n}}(U)\cap V\neq\emptyset$.\\
\end{proof}
%\begin{lemma}
 %The IFS $\mathcal{F}$ is chain transitive if and only if $\mathcal{F}^{-1}$ is chain transitive.
%\end{lemma}
This is well known that in discrete dynamical systems topologically mixing implies topologically transitive but not the converse \cite{[BT],[RW]}. For example "an irrational rotation of the circle is topologically transitive (the orbit of a
small open interval will eventually intersect any other small open interval),
but not topologically mixing (a rotating small interval will typically leave
the another interval for a while before returning)"\cite{[BT]}. In the next example we give an \textbf{IFS} consists two irrational rotation that is topological mixing.
\begin{example}\ref{eza}
Consider a circle $S^{1}$ with coordinate $x \in [0; 1)$ and we denote by $d$
the metric on $S^{1}$ induced by the usual distance on the real line. Let $\pi(x) :\mathbb{ R} \rightarrow S^{1}$ be the covering projection defined by the relations
$$\pi(x) \in [0; 1) and~ \pi(x) = x(x ~mod~ 1)$$
with respect to the considered coordinates on $S^{1}$.
Let $\pi:[0,1]\rightarrow S^{1}$ be a map defined by $\pi(t)=(\cos(2\pi t),\sin(2\pi t))$
 Let $F_{1}:[0,1]\rightarrow[0,1]$ be a homeomorphism defined by
 $F_{1}(t) =t+\alpha$
and
 $F_{2}:[0,1]\rightarrow[0,1]$ be a homeomorphism defined by
 $F_{2}(t) =t+\beta$, where $\alpha<\beta$ are two distinct positive  numbers that $\beta-\alpha$ is irrational.\\
Assume that $f_{i}$ is a homeomorphism on $S^{1}$ defined by\\ $f_{i}(\cos(2\pi t),\sin(2\pi t))=(\cos(2\pi F_{i}(t)),\sin(2\pi F_{i}(t)))$, for $i\in\{0,1\}$. \\Consider  the \textbf{IFS} $\mathcal{F}=\{S^{1}, f_{\lambda}|\lambda\in\{1,2\}\}$.  We show that $\mathcal{F}$ is chain mixing.\\Fix $\epsilon>0$. For each sequence $\sigma\in \Lambda^{\mathbb{Z}_{+}}$ and $n>1$,  $\mathcal{F}_{\sigma_{n}}(x)=x+k\alpha+(n-k)\beta$, when $k$ is the cardinality of the sets $\{\lambda_{i}\mid \lambda_{i}=1, 1\leq i\leq n\}$. Since $\beta-\alpha$ is irrational, similar argument to Example $2$ in \cite{[PD]} shows that the set $\{k\alpha+(n-k)\beta~mod~1\mid n>1,~k\leq n\}$ is dense in $[0,1]$ .
 So, for every $x,y$ in $S^{1}$  we can find $N_{0}>1$ such that there exists an $\epsilon-$chain from $x$ to $y$ of  length $n$, for all $n>N_{0}$.
\end{example}
Let $\Lambda$ be a finite set, $ \mathcal{F}=\{X; f_{\lambda}|\lambda\in\Lambda\}$ is an $IFS$ and let $k\geq 2$ be an integer. Set $$\mathcal{F}^{k}= \{X; g_{\mu}|\mu\in\Pi\}=\{X; f_{\lambda_{k-1}}o ...of_{\lambda_{0}}|\lambda_{0},...,\lambda_{k-1}\in\Lambda\}.$$
 The proof of the following lemma is straightforward and is therefore omitted.
\begin{lemma}
Suppose that the \textbf{IFS} $ \mathcal{F}$ is chain mixing then  $ \mathcal{F}^{n}$ is chain transitive, for all positive integers $n$.
\end{lemma}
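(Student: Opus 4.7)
The plan is to exploit the chain mixing of $\mathcal{F}$ to produce a long $\delta$-chain and then group its steps into blocks of length $n$, each block realising a single step of an $\epsilon$-chain in $\mathcal{F}^{n}$. Fix $\epsilon>0$, $n\geq 1$, and $x,y\in X$. I would first select an auxiliary $\delta>0$ (depending on $\epsilon$ and $n$) via uniform continuity. Then, using chain mixing, find $N$ such that for every $m>N$ and every pair $a,b\in X$ there is a $\delta$-chain of $\mathcal{F}$ from $a$ to $b$ of length exactly $m$; pick $m=kn>N$ for some integer $k$ and apply this to the pair $x,y$.

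The technical heart of the argument is the choice of $\delta$. Since $\Lambda$ is finite and $X$ is compact, every $f_\lambda$ is uniformly continuous, so there is a common modulus of continuity $\omega$ with $\omega(t)\to 0$ as $t\to 0^{+}$. I would verify by induction on $i\in\{0,1,\ldots,n\}$ the following claim: for any $\delta$-chain $x_{0},x_{1},\ldots,x_{i}$ of $\mathcal{F}$ with driver $\lambda_{0},\ldots,\lambda_{i-1}$, the error $e_{i}=d(x_{i},f_{\lambda_{i-1}}\circ\cdots\circ f_{\lambda_{0}}(x_{0}))$ satisfies $e_{i}<\delta+\omega(e_{i-1})$. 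Choosing $\delta$ small enough (iterating $\omega$ a fixed number $n$ of times) makes $e_{n}<\epsilon$ regardless of the driver. Crucially, the bound depends only on $n$ and the uniform moduli of the finitely many $f_\lambda$, not on the particular chain.

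With $\delta$ so chosen, let $x=x_{0},x_{1},\ldots,x_{kn}=y$ be the $\delta$-chain of $\mathcal{F}$ with driver $(\lambda_{0},\ldots,\lambda_{kn-1})$. Define $y_{j}=x_{jn}$ and $g_{j}=f_{\lambda_{(j+1)n-1}}\circ\cdots\circ f_{\lambda_{jn}}\in\mathcal{F}^{n}$ for $0\leq j\leq k-1$. Each block $x_{jn},x_{jn+1},\ldots,x_{(j+1)n}$ is itself a $\delta$-chain of $\mathcal{F}$ of length $n$, so the claim above gives $d(y_{j+1},g_{j}(y_{j}))<\epsilon$. Hence $y_{0},y_{1},\ldots,y_{k}$ is an $\epsilon$-chain of $\mathcal{F}^{n}$ from $x$ to $y$, and chain transitivity of $\mathcal{F}^{n}$ follows.

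The only non-automatic step is the error propagation lemma in the middle paragraph; everything else is just bookkeeping. Because $n$ is fixed and $\Lambda$ is finite, the inductive choice of $\delta$ terminates after $n$ stages, so no uniformity in $n$ is needed and the estimate is easily arranged. That is why the author calls the proof straightforward and omits it.
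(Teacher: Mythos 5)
Your proof is correct. The paper explicitly omits the proof of this lemma as ``straightforward,'' so there is nothing to compare against; your argument --- choosing $\delta$ by an $n$-fold uniform-continuity error-propagation estimate, taking a $\delta$-chain of length exactly $kn$ supplied by chain mixing, and regrouping it into $k$ blocks that form an $\epsilon$-chain for $\mathcal{F}^{n}$ --- is exactly the standard argument the author presumably had in mind, and it is carried out correctly.
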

Given two complete metric spaces $(X, d_X)$ and $(Y, d_Y)$, consider the product set $X \times Y$ endowed with the metric \[D((x_1,y_1),(x_2,y_2))=\max\{d_X (x_1, x_2), d_Y (y_1, y_2)\}\].
Let $\mathcal{F} = \{X;f_{\lambda}|\lambda\in \Lambda\}$ and $\mathcal{G}=\{Y; g_{\gamma}|\gamma\in\Gamma\}$ be two parameterized \textbf{IFS}. The \textbf{IFS} $\mathcal{H} = \{X\times Y ; \Phi_{\lambda,\gamma} |\lambda\in\Lambda, \gamma\in\Gamma\}$, defined by $\Phi_{\lambda,\gamma}(x,y) := (f_{\lambda}(x),g_{\gamma}(y))$ is called the product of the two \textbf{IFS} $\mathcal{F}$ and $\mathcal{G}$.\\
\begin{lemma}\label{lex}
 Suppose that $ \mathcal{F}^{n}$ is chain transitive, for all positive integers $n$. Then the \textbf{IFS}   $ \mathcal{F}\times  \mathcal{F}$ is chain transitive.
\end{lemma}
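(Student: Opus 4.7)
The plan is to construct an $\epsilon$-chain in $\mathcal{F}\times\mathcal{F}$ from an arbitrary $(x_1,y_1)$ to an arbitrary $(x_2,y_2)$. Since the product IFS has index set $\Lambda\times\Lambda$ and $D$ is the coordinate maximum, producing an $\epsilon$-chain of length $L$ in $\mathcal{F}\times\mathcal{F}$ is the same as producing two $\epsilon$-chains in $\mathcal{F}$ of the common length $L$ --- one from $x_1$ to $x_2$ and one from $y_1$ to $y_2$ --- with independently chosen index words in $\Lambda^L$. So the proof reduces to matching chain lengths in the two coordinates. To manage the $\delta$-jumps that appear when unfolding a chain in a power of $\mathcal{F}$, fix $\delta\in(0,\epsilon)$ by uniform continuity of the finite family $\{f_\lambda\}$ on compact $X$ so that $d(u,v)<\delta$ implies $d(f_\lambda(u),f_\lambda(v))<\epsilon$ for every $\lambda\in\Lambda$.

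I would then apply the hypothesis twice. Chain transitivity of $\mathcal{F}=\mathcal{F}^1$ gives a $\delta$-chain from $x_1$ to $x_2$ of some length $m$. Chain transitivity of $\mathcal{F}^m$ gives a $\delta$-chain from $y_1$ to $y_2$ in $\mathcal{F}^m$ of some length $p$, which unfolds into an $\epsilon$-chain from $y_1$ to $y_2$ in $\mathcal{F}$ of length exactly $pm$: within each $\mathcal{F}^m$-block one follows the exact $\mathcal{F}$-orbit under the corresponding length-$m$ composition, paying a single $\delta$-jump at the block boundary that the choice of $\delta$ absorbs into an $\epsilon$-error, as in the proof of Lemma \ref{le}. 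To bring the $x$-chain up to length $pm$, apply chain transitivity of $\mathcal{F}^m$ to the pair $(x_2,x_2)$ to produce an $\mathcal{F}^m$-loop at $x_2$ of length $p-1$, unfold to an $\epsilon$-chain in $\mathcal{F}$ of length $(p-1)m$ from $x_2$ to itself, and concatenate with the initial $x$-chain. Packaging the resulting two length-$pm$ chains coordinate-wise with paired indices in $(\Lambda\times\Lambda)^{pm}$ then produces the required chain in $\mathcal{F}\times\mathcal{F}$.

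The hard part is the step that asks for an $\mathcal{F}^m$-loop at $x_2$ of the \emph{specific} length $p-1$: chain transitivity of $\mathcal{F}^m$ only supplies a loop of some length, and concatenation by itself only produces multiples of it. The resolution is to use the full hypothesis, not just chain transitivity of $\mathcal{F}^m$: the set of $\delta$-loop-lengths at $x_2$ in $\mathcal{F}$ is a sub-semigroup of $\mathbb{N}$ that contains, for every $k\geq 1$, a positive multiple of $k$ coming from unfolding an $\mathcal{F}^k$-loop at $x_2$, and so its greatest common divisor is $1$; otherwise a nontrivial period-$d$ decomposition of the chain-recurrence class of $x_2$ would obstruct chain transitivity of some $\mathcal{F}^d$. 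A Sylvester--Frobenius argument then shows that this loop-length semigroup contains every sufficiently large integer, and by first enlarging $p$ through concatenation of $y_2$-loops to the $y$-chain one may assume $p$ is as large as needed, so that the required $(p-1)$-loop in $\mathcal{F}^m$ at $x_2$ is realized.
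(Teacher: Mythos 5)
Your proof is correct in substance but takes a genuinely different, and considerably heavier, route than the paper's. Both arguments reduce the problem to producing two $\epsilon$-chains of a common length, one in each coordinate. The paper gets the lengths to match with one trick: it takes an $\epsilon$-chain $P_1$ from $a_1$ to $b_1$ of length $m$ together with an $\epsilon$-loop $P_2$ at $b_1$ of length $k$, and in the second coordinate follows the exact orbit of $a_2$ for $m$ steps and then connects to $b_2$ by unfolding an $\epsilon$-chain of $\mathcal{F}^{k}$; the surplus length beyond $m$ is then automatically a multiple of $k$ and is absorbed in the first coordinate by repeating $P_2$. Because the power is chosen to equal the length of a loop that is already in hand, no control over \emph{which} loop lengths occur is ever needed. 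You instead fix the power as $\mathcal{F}^m$ and are then forced to realize a loop at $x_2$ of one specific length $(p-1)m$, which drags in the gcd-of-loop-lengths and Sylvester--Frobenius machinery --- in effect re-proving part of the cyclic-decomposition material (the analogue of Lemma \ref{lec}) that the paper only develops later, for Theorem \ref{main}. That machinery does close your argument, but note that the first of your two justifications for $\gcd=1$ is invalid: a sub-semigroup of $\mathbb{N}$ containing a positive multiple of every $k$ need not have greatest common divisor $1$ (the even numbers contain $2k$ for every $k$). Your parenthetical fallback --- that a nontrivial period-$d$ decomposition would contradict chain transitivity of $\mathcal{F}^d$ --- is the correct argument and should be the one you actually state and prove. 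Finally, the uniform-continuity choice of $\delta$ is unnecessary: unfolding a $\delta$-chain of a power follows exact orbits inside each block, so the only errors incurred are the original $\delta$-jumps at block boundaries, and taking $\delta=\epsilon$ already suffices.
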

\begin{proof}
Fix $\epsilon>0$. Let $(a_{1},b_{1})$ and $(a_{2},b_{2})$ be two arbitrary points in $X\times X$. This is sufficient to prove that, there exist two $\epsilon-$chains of the same length from $a_{1}$ to $b_{1}$ and from $a_{2}$ to $b_{2}$.\\
Let $P_{1}=\{a_{1}=x_{0},x_{1},...,x_{m}=b_{1}\}$ be an $\epsilon-$chain from $a_{1}$ to $b_{1}$ and  $P_{2}=\{b_{1}=y_{0},y_{1},...,y_{k}=b_{1}\}$ be an $\epsilon-$chain from $b_{1}$ to itself.\\
Consider a sequence $\{\lambda_{1},...,\lambda_{m-1}\}$ in $\Lambda$. Since $ \mathcal{F}^{k+1}$ is chain transitive, there is an $\epsilon-$chain of $ \mathcal{F}^{k+1}$, $$P^{k+1}_{3}=\{f_{\lambda_{m-1}}o~....o~f_{\lambda_{1}}(a_{2})=z_{0},z_{1},...,z_{l}=b_{2}\}.$$ For each $0\leq i<l$ there exists a sequence $\{\lambda^{i}_{1},...,\lambda^{i}_{k}\}$ in $\Lambda$ such that $d(f_{\lambda^{i}_{k}}~o...of_{\lambda^{i}_{1}}(z_{i}),z_{i+1})<\epsilon$.
Then,  \\
$\{a_{2},f_{\lambda_{1}}(a_{2}),...,f_{\lambda_{m-1}}o~....o~f_{\lambda_{1}}(a_{2}),\\
z_{1},f_{\lambda^{1}_{1}}(z_{1}),...,f_{\lambda^{1}_{k-1}}~o~f_{\lambda^{1}_{k-2}}~o...of_{\lambda^{1}_{1}}(z_{1}),\\
z_{2},f_{\lambda^{2}_{1}}(z_{2}),...,f_{\lambda^{2}_{k-1}}~o~f_{\lambda^{2}_{k-2}}~o...of_{\lambda^{2}_{1}}(z_{2}),\\
\vdots\\
z_{l-1},f_{\lambda^{l-1}_{1}}(z_{l-1}),...,f_{\lambda^{l-1}_{k-1}}~o~f_{\lambda^{l-1}_{k-2}}~o...of_{\lambda^{l-1}_{1}}(z_{l-1}),z_{l}=b_{2}\}$\\
is an $\epsilon-$chain for $ \mathcal{F}$ of  length $m+kl+1$ from $a_{2}$ to $b_{2}$.\\
On the other hand, the sequence $\{P_{1},P_{2},...,P_{2}\}$, which the sequence $P_{2}$ repeated $l$ times is an $\epsilon-$chain of the length  $m+kl+1$ from $a_{1}$ to $b_{1}$. So, the proof is complete.
\end{proof}
Consider the \textbf{IFS} $\mathcal{F} = \{X;f_{\lambda}|\lambda\in \Lambda\}$, by definitions, if $f_{\lambda}$ is chain mixing, for some $\lambda\in \Lambda$, then $\mathcal{F} $ is chain mixing. In the next example we introduce an \textbf{IFS} $\mathcal{F} = \{X;f_{\lambda}|\lambda\in \Lambda\}$ such that $f_{\lambda}$ isn't chain mixing, for any $\lambda\in \Lambda$, but $\mathcal{F}$ is chain mixing.
\begin{example}
Consider the tent map
\[T(x) = \left\lbrace
  \begin{array}{c l}
 2x& \text{if ~$~0 \leq x<\frac{1}{2}$},\\
    -2x+2 & \text{if ~$~\frac{1}{2}\leq x\leq 1$}
 \end{array}
  \right. \]
  By Lemma 3.4 and the last example in \cite{[FH]}, $T(x)$ is chain mixing.\\
  Let $f_{1}(x):[0,1]\rightarrow [0,1]$ be a map defined by

\[ f_{1}(x) = \left\lbrace
  \begin{array}{c l}
 T(x)& \text{if ~$~0 \leq x<\frac{1}{2}$},\\
    1& \text{if ~$~\frac{1}{2}\leq x\leq 1$}
 \end{array}
  \right. \]
  and
  $f_{2}(x):[0,1]\rightarrow [0,1]$ be a map defined by

\[ f_{2}(x) = \left\lbrace
  \begin{array}{c l}
 1& \text{if ~$~0 \leq x<\frac{1}{2}$},\\
  T(x)& \text{if ~$~\frac{1}{2}\leq x\leq 1$}
 \end{array}
  \right. \]
  Consider $0<\epsilon< \frac{1}{2}$, since $f_{1}(\frac{3}{4})=1$ then there isn't any $\epsilon-$chain from $\frac{3}{4}$ to $1$. This implies that $f_{1}$, as an ordinary dynamical systems,  isn't chain mixing. Similar argument shows that  $f_{2}$ isn't chain mixing. \\But, we show that $\mathcal{F}=\{f_{1}, f_{2}; [0,1]\}$ is a chain mixing \textbf{IFS}.\\
  Let $\epsilon>0$ be arbitrary. Since $T(x)$ is chain mixing, there is $N>0$ such that for any $x,y\in X$ and $n>N$ there exists an $\epsilon-$chain for tent map from $x$ to $y$ of the length exactly $n$. \\
  Suppose that $x=x_{0},x_{1},...,x_{n-1}=y$ is the corresponding $\epsilon-$chain for tent map. Consider the sequence $\sigma= \{\lambda_{0},\lambda_{1},..., \lambda_{n-1}\}$, where $\lambda_{0}=1$ if $x_{i}\in [0,\frac{1}{2}]$ and $\lambda_{0}=1$ if $x_{i}\in [\frac{1}{2},1]$. So, for all $0\leq i\leq n-1$, we have $f_{\lambda_{i}}(x_{i}) =T(x_{i})$ and hence $d(f_{\lambda_{i}}(x_{i}),x_{i+1})=d(T(x_{i}),x_{i+1})<\epsilon$. This implies that $x=x_{0},x_{1},...,x_{n-1}=y$ is an $\epsilon-$chain from $x$ to $y$ of  length exactly $n$ for $\mathcal{F}$.
\end{example}
\begin{lemma}\label{lea}
Let $\mathcal{F}=\{X; f_{\lambda}|\lambda\in\Lambda\}$ be an \textbf{IFS} and $X$ be a connected space. Chain recurrence implies chain transitivity.
\end{lemma}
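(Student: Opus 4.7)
The plan is to fix arbitrary $x, y \in X$ and $\epsilon > 0$ and to produce an $\epsilon$-chain of $\mathcal{F}$ from $x$ to $y$ via a connectedness argument on the reachable set
\[
A_{\epsilon}(x) := \{\, z \in X : \text{there is an } \epsilon\text{-chain of } \mathcal{F} \text{ from } x \text{ to } z \,\}.
\]
I will show that $A_{\epsilon}(x)$ is nonempty, open, and closed in $X$; connectedness of $X$ will then force $A_{\epsilon}(x) = X$, yielding $y \in A_{\epsilon}(x)$ as required.

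Nonemptiness is immediate, since $f_{\lambda}(x) \in A_{\epsilon}(x)$ for any $\lambda \in \Lambda$ (the two-term sequence $x, f_{\lambda}(x)$ qualifies). Openness is a routine perturbation at the endpoint: given an $\epsilon$-chain $x = x_{0}, \ldots, x_{n} = z$ with drivers $\lambda_{0}, \ldots, \lambda_{n-1}$, the slack $\epsilon - d(f_{\lambda_{n-1}}(x_{n-1}), z)$ is strictly positive, so every $z'$ within this slack of $z$ may be substituted for $z$ at the end of the chain.

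The main obstacle will be the closedness of $A_{\epsilon}(x)$, equivalently the openness of its complement, and this is where chain recurrence must be combined with the continuity of the maps $f_{\lambda}$. Supposing $y \notin A_{\epsilon}(x)$, my plan is to use chain recurrence of $y$ to fix an $(\epsilon/3)$-chain $y = y_{0}, y_{1}, \ldots, y_{m} = y$ with drivers $\mu_{0}, \ldots, \mu_{m-1}$, and then use continuity of $f_{\mu_{0}}$ at $y$ to choose $\delta > 0$ with $d(z, y) < \delta$ forcing $d(f_{\mu_{0}}(z), f_{\mu_{0}}(y)) < \epsilon / 3$. For such $z$, replacing the initial vertex $y_{0} = y$ by $z$ produces a legitimate $\epsilon$-chain $z, y_{1}, \ldots, y_{m} = y$: the altered first transition is bounded by $\epsilon/3 + \epsilon/3 < \epsilon$, while the remaining transitions retain their original bound $\epsilon/3$. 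If moreover $z$ lay in $A_{\epsilon}(x)$, concatenating an $\epsilon$-chain $x \to z$ with this chain $z \to y$ would give an $\epsilon$-chain $x \to y$, contradicting $y \notin A_{\epsilon}(x)$. Hence $B(y, \delta)$ is disjoint from $A_{\epsilon}(x)$, and its complement is open at $y$.

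Putting these pieces together, $A_{\epsilon}(x)$ is a nonempty clopen subset of the connected space $X$, so $A_{\epsilon}(x) = X$. Since $x, y$ and $\epsilon$ were arbitrary, $\mathcal{F}$ is chain transitive. The only step I expect to require real care is the choice of the resolution $\epsilon/3$ in the recurrence chain at $y$, which is exactly tight enough to absorb both the continuity wiggle $d(f_{\mu_{0}}(z), f_{\mu_{0}}(y))$ and the original chain slack within the overall budget $\epsilon$.
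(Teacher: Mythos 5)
Your proposal is correct and follows essentially the same strategy as the paper: a connectedness argument showing that a suitable $\epsilon$-chain-reachability set is nonempty and clopen, with the chain-recurrence chain at $y$ combined with continuity of the maps supplying the key perturbation that proves closedness. The only differences are cosmetic and slightly in your favor: you use the one-sided forward-reachable set $A_{\epsilon}(x)$ where the paper uses two-sided $\epsilon$-chain-equivalence classes, and your choice of resolution $\epsilon/3$ for the recurrence chain makes the triangle-inequality estimates close cleanly, whereas the paper's stated constants ($\epsilon$-chains perturbed within $\delta\le\epsilon/2$) nominally overshoot the budget and implicitly require the recurrence chain to be taken at a finer resolution.
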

\begin{proof}
Assume that $\mathcal{F}$ has the chain recurrence property and pick $\epsilon>0$. We say that $x$ and $y$ are $\epsilon-$chain equivalent, if there are $\epsilon-$chains from $x$ to $y$ and from $y$ to $x$. Since $\mathcal{F}$ is chain recurrent, this is an equivalence relation. By connectedness   of $X$ this is sufficient to show that every equivalence classes is an open set.\\
Let $x,y$ are $\epsilon-$chains equivalent. We show that there is $\delta>0$ such that $x$ is $\epsilon-$chains equivalent to all points of $B_{\delta}(y)$. Choose $\delta\leq \frac{\epsilon}{2}$ such that $d(a,b)<\delta $ implies that $d(f_{\lambda}(a),f_{\lambda}(b))<\frac{\epsilon}{2}$, for all $a,b \in X$ and $\lambda\in\Lambda$. Consider the arbitrary point $z\in B_{\delta}(y)$, we show that $x$ is $\epsilon-$chains equivalent to $z$. Let $(y_{0}=x,y_{1},..,y_{n}=y) $ be an $\epsilon-$chain from $x$ to $y$ and $(y_{n}=y, y_{n+1},...,y_{n+m}=y)$ be an $\epsilon-$chain from $y$ to itself. Then, $(y_{0}=x,y_{1},..,y_{n}=y, y_{n+1},...,y_{n+m-1},z) $ is an $\epsilon-$chain from $x$ to $z$. Similarly, let $(x_{0}=y,x_{1},..,x_{k}=x) $ be an $\epsilon-$chain from $y$ to $x$. Then $(z, y_{n+1},...,y_{n+m}=y=x_{0},x_{1},...,y_{n}=x) $ is an $\epsilon-$chain from $z$ to $x$. So, $x$ and $z$ are $\epsilon-$chain equivalent.
\end{proof}
\section{main results}
In this section by using adding machine maps and topological conjugacy we investigate the relation between chain mixing, chain transitivity and chain recurrent properties in iterated function systems.
\begin{definition}\label{df}(Adding machine map. \cite{[BK]})
Let $\alpha=(j_{1},j_{2},...)$ be a sequence of integers where each $j_{i}\geq 2$. Let $\Delta_{\alpha}$ denote all sequences $(x_{1},x_{2},...)$ where $x_{i}\in\{0,1,...,j_{i}-1\}$ for each $i$. We put a metric $d_{\alpha}$ on $\Delta_{\alpha}$ given by $d_{\alpha}((x_{1},x_{2},...),(y_{1},y_{2},...))=\Sigma_{i=1}^{\infty}\frac{\delta(x_{i},y_{i})}{2^{i}},$ where $\delta(x_{i},y_{i})=1$ if $x_{i}\neq y_{i}$ and $\delta(x_{i},y_{i})=0$ if $x_{i}= y_{i}.$ Addition in $\Delta_{\alpha}$ is defined as follows:\\
$$(x_{1},x_{2},...)+(y_{1},y_{2},...)=(z_{1},z_{2},...),$$ where $z_{1}=(x_{1}+y_{1})~ mod ~j_{1}$ and $z_{1}=(x_{2}+y_{2}+t_{1})~ mod ~j_{2}$. Here $t_{1}=0$ if $x_{1}+y_{1}<j_{1}$ and $t_{1}=1$ if $x_{1}+y_{1}\geq j_{1}$. Continue adding and carrying in this way for the whole sequence.\\ We define, the adding machine map, $g_{\alpha}:\Delta_{\alpha}\rightarrow \Delta_{\alpha}$ by $$g_{\alpha}((x_{1},x_{2},...))=(x_{1},x_{2},...)+(1,0,0,...).$$
\end{definition}
\begin{lemma}
Let $\mathcal{F}$ be chain transitive IFS and let $\epsilon>0$. There exists $k_{\epsilon}>1$ such that for any $x\in X$, $k_{\epsilon}$ is the greatest common divisor of the length of all $\epsilon-$chains from $x$ to itself.
\end{lemma}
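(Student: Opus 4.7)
The plan is to verify that the greatest common divisor of the lengths of $\epsilon$-chain loops at a point is independent of the base point, and so defines a single integer $k_\epsilon$. For each $x \in X$ set
\[
L(x,\epsilon) = \{\, n \geq 1 : \text{there is an } \epsilon\text{-chain of } \mathcal{F} \text{ from } x \text{ to } x \text{ of length } n \,\},
\]
where ``length'' means the number of transitions $x_i \to x_{i+1}$ (equivalently, the length of the finite initial word of $\sigma \in \Lambda^{\mathbb{Z}_+}$ used). Define $G(x,\epsilon) := \gcd L(x,\epsilon)$. The assertion of the lemma is that $G(x,\epsilon)$ is the same positive integer $k_\epsilon$ for every $x$.

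First I would observe that $L(x,\epsilon)$ is nonempty for each $x \in X$: applying chain transitivity with $y=x$ yields at least one $\epsilon$-chain from $x$ to itself, so $G(x,\epsilon)$ is a well-defined positive integer. Next, given arbitrary $x,y \in X$, chain transitivity supplies an $\epsilon$-chain $C_1$ from $x$ to $y$, of some length $m$, and an $\epsilon$-chain $C_2$ from $y$ to $x$, of some length $n$. The crucial observation is that in the IFS setting an $\epsilon$-chain is specified by a finite $\Lambda$-word together with a sequence of points, and finite $\Lambda$-words can always be spliced and then extended arbitrarily to an element of $\Lambda^{\mathbb{Z}_+}$. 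Thus concatenating $C_1$ with $C_2$ produces a valid $\epsilon$-chain loop at $x$ of length $m+n$, which gives $G(x,\epsilon) \mid m+n$. Similarly, for any $\ell \in L(y,\epsilon)$, inserting a length-$\ell$ loop at $y$ between $C_1$ and $C_2$ yields an $\epsilon$-chain loop at $x$ of length $m+\ell+n$, so $G(x,\epsilon) \mid m+\ell+n$. Subtracting, $G(x,\epsilon) \mid \ell$ for every $\ell \in L(y,\epsilon)$, and hence $G(x,\epsilon) \mid G(y,\epsilon)$. Exchanging the roles of $x$ and $y$ gives the reverse divisibility, so $G(x,\epsilon) = G(y,\epsilon)$. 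Set $k_\epsilon$ to this common value.

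The main subtlety is not arithmetic but bookkeeping: one must verify that the concatenation of two $\epsilon$-chains of $\mathcal{F}$ is again an $\epsilon$-chain of $\mathcal{F}$, which requires producing a single $\sigma \in \Lambda^{\mathbb{Z}_+}$ whose appropriate segments govern each piece. Since every chain uses only finitely many symbols of $\sigma$, one may paste the underlying finite $\Lambda$-words and then extend by any choice to infinity; the inequality $d(x_{i+1},f_{\lambda_i}(x_i))<\epsilon$ is checked segment by segment and so survives concatenation. With that housekeeping in place, the divisibility argument above is the whole content of the lemma.
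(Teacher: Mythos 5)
Your proof is correct and is essentially the argument the paper intends: the paper's proof is only a one-line deferral to Lemma 7 of Richeson--Wiseman, and what you have written out (base-point independence of the gcd via concatenating chains, plus the IFS-specific check that finite $\Lambda$-words can be spliced and extended to an element of $\Lambda^{\mathbb{Z}_+}$) is exactly the content of that cited argument adapted to $\mathcal{F}$. The only cosmetic mismatch is that the statement's ``$k_{\epsilon}>1$'' should be read as $k_{\epsilon}\geq 1$, since your construction (and the paper's later use in Theorem \ref{main}) allows $k_{\epsilon}=1$.
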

\begin{proof}
Similar to proof of Lemma 7 in \cite{[RW]}, it is sufficient to consider $\epsilon-$chains for $\mathcal{F}$.
\end{proof}
Let $\mathcal{F}$ be chain transitive. Define a relation on $X$ by setting $a\sim_{\epsilon}b$, if there is an $\epsilon-$chain from $a$ to $b$ of length $mk_{\epsilon}$, for some $m>0$. Since $\mathcal{F}$ is chain transitive this is clear that $\sim_{\epsilon}$ is an equivalence relation.
\begin{lemma}\label{lec}
For each $\epsilon>0$ there are $k_{\epsilon}$ equivalence classes for relation $\sim_{\epsilon}$, such that for every $\sigma\in \Lambda^{\mathbb{Z}_{+}}$, $\mathcal{F}_{\sigma}$ cycles among the classes periodically and each classes is invariant under $\mathcal{F}_{\sigma_{k_{\epsilon}}}$.
\end{lemma}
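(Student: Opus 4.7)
The plan is to describe the $\sim_\epsilon$-equivalence classes explicitly as residue classes modulo $k_\epsilon$ with respect to a fixed basepoint, and then read off the cycling and invariance almost for free. The key preparatory step is to show that for any two points $a,b\in X$, every $\epsilon$-chain from $a$ to $b$ has the same length modulo $k_\epsilon$. Indeed, if two such chains have lengths $n_1$ and $n_2$, then by chain transitivity I pick an $\epsilon$-chain from $b$ to $a$ of some length $m$; concatenating produces two $\epsilon$-loops at $a$ of lengths $n_1+m$ and $n_2+m$, both divisible by $k_\epsilon$ by the previous lemma, whence $n_1\equiv n_2\pmod{k_\epsilon}$. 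Write $r(a,b)$ for this common residue in $\{0,1,\dots,k_\epsilon-1\}$.

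Now fix a basepoint $a_0\in X$ and set $C_i=\{b\in X : r(a_0,b)=i\}$ for $i=0,\dots,k_\epsilon-1$. Chain transitivity gives $X=C_0\cup C_1\cup\cdots\cup C_{k_\epsilon-1}$, and the previous paragraph shows the union is disjoint. Each $C_i$ is nonempty, since for any $\lambda\in\Lambda$ the sequence $a_0,f_\lambda(a_0),f_\lambda^2(a_0),\dots,f_\lambda^{i}(a_0)$ is a (trivial) $\epsilon$-chain of length $i$, so $f_\lambda^{i}(a_0)\in C_i$. To see that these $C_i$ are exactly the $\sim_\epsilon$-classes, concatenate an $\epsilon$-chain of length $r(a_0,b)$ from $a_0$ to $b$ with any $\epsilon$-chain of length $L$ from $b$ to $b'$; the resulting chain from $a_0$ to $b'$ has length $r(a_0,b)+L$, so $r(a_0,b')\equiv r(a_0,b)+L\pmod{k_\epsilon}$. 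Hence $b\sim_\epsilon b'$, which means $L$ is a positive multiple of $k_\epsilon$, holds iff $r(a_0,b)=r(a_0,b')$.

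Cycling and invariance are then essentially automatic. For any $\sigma=\{\lambda_1,\lambda_2,\dots\}\in\Lambda^{\mathbb{Z}_+}$ and any $b\in C_i$, the one-step chain $b,f_{\lambda_1}(b)$ shows $f_{\lambda_1}(b)\in C_{i+1\bmod k_\epsilon}$; thus every factor $f_{\lambda_j}$ maps $C_i$ into $C_{i+1}$. Iterating, $\mathcal{F}_{\sigma_n}(C_i)\subseteq C_{i+n\bmod k_\epsilon}$, so $\mathcal{F}_{\sigma_{k_\epsilon}}$ preserves each class while the sequence $\mathcal{F}_{\sigma_n}$ cycles a point through the classes in the cyclic order $C_i\to C_{i+1}\to\cdots\to C_{i+k_\epsilon-1}\to C_i$, independently of the choice of $\sigma$. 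The only delicate point is the well-definedness of $r(a,b)$; it hinges on invoking the previous lemma at the correct basepoint (namely $a$) in the concatenation argument, after which the rest of the proof is bookkeeping with residues.
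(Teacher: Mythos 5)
Your proof is correct, and it is organized rather differently from the paper's. The paper fixes a point $x$ and a symbol sequence $\sigma$ and argues by contradiction along the orbit: if $\mathcal{F}_{\sigma_i}(x)$ for some $0<i<k_{\epsilon}$ fell back into $\overline{x}$, one could splice chains to produce a loop at $x$ of length $\not\equiv 0 \pmod{k_{\epsilon}}$, contradicting the definition of $k_{\epsilon}$; it then notes that the orbit segment of length $k_{\epsilon}$ is itself a chain returning to $\overline{x}$. You instead build a global residue function $r(a_0,\cdot)$ with values in $\{0,\dots,k_{\epsilon}-1\}$ and identify the classes as its level sets. Both arguments rest on the same engine --- concatenating a chain from $a$ to $b$ with one from $b$ to $a$ and invoking the divisibility of loop lengths by $k_{\epsilon}$ --- but your version buys something concrete: it shows directly that there are \emph{exactly} $k_{\epsilon}$ classes and that they exhaust $X$, and it makes the cycling independent of the choice of $\sigma$ and of the basepoint, points the paper's proof leaves implicit (it exhibits $k_{\epsilon}$ distinct classes met by one orbit but never rules out further classes). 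The paper's route is shorter and stays closer to the definition of $\sim_{\epsilon}$; yours is the more self-contained and checkable of the two. One small wording issue: in your final equivalence you should make explicit that when $r(a_0,b)=r(a_0,b')$ an $\epsilon$-chain from $b$ to $b'$ of \emph{positive} length exists (chain transitivity supplies it), so that its length, being $\equiv 0 \pmod{k_{\epsilon}}$ and positive, is indeed $mk_{\epsilon}$ with $m>0$ as the definition of $\sim_{\epsilon}$ requires.
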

\begin{proof}
The case $k_{\epsilon}=1$ is clear.\\
Let $k_{\epsilon}>1$ and $\sigma=\{\lambda_{1},\lambda_{2},...,\lambda_{k},...\}$ be an arbitrary sequence in $\Lambda$. Consider $x\in X$, we have an $\epsilon-$chain from $x$ to $x$ of $mk_{\epsilon}$ length. If $f_{\lambda_{1}}(x)\in \overline{x}$ then we have an $\epsilon-$chain from $f_{\lambda_{1}}(x)$ to $x$ of $mk_{\epsilon}$ length, for some $m_{1}>1$. So, there exists an $\epsilon-$chain from $x$ to $x$ of $m_{1}k_{\epsilon}+1$ length, that is a contradict with $k_{\epsilon}>1$. Similarly for each $i<k_{\epsilon}$ if $\mathcal{F}_{\sigma_{i}}(x)\in \overline{x}$, then we have an $\epsilon-$chain from $x$ to $x$ of $m_{1}k_{\epsilon}+i$ length, that is a contradict. Then, $\overline{x}, \overline{f_{\lambda_{1}}(x)}, ...,\overline{\mathcal{F}_{\sigma_{i}}(x)}$ are $k_{\epsilon}$ separated equivalence classes.\\
On the other hand $\{x, f_{\lambda_{1}}(x), ...,\mathcal{F}_{\sigma_{k_{\epsilon}}}(x)\}$ is an   $\epsilon-$chain from $x$ to $x$. Then $\mathcal{F}_{\sigma_{k_{\epsilon}}}(x)\in \overline{x}$ and the proof is complete.
\end{proof}
\begin{remark}\label{rea}
Let $\epsilon_{1}\leq \epsilon_{2}$. We have the following properties:
$(a)$ Since every $\epsilon_{1}-$chain is also an $\epsilon_{2}-$chain then every $\sim_{\epsilon_{2}}$ equivalence classes is  the union of some $\sim_{\epsilon_{1}}$ equivalence classes. So, $k_{\epsilon_{2}}$ divided $k_{\epsilon_{1}}$. \\ $(b)$ Let $C_{\epsilon}=\{\overline{x_{1}},...,\overline{x_{k_{\epsilon}}}\}$ be the set of all $\sim_{\epsilon}$ equivalence classes, then $\overline{x_{1}},...,\overline{x_{k_{\epsilon}}}$ are close and open, pairwise disjoint, nonempty subset of $X$.
\end{remark}
\begin{lemma}\label{led}
Let $\{\epsilon_{i}\}$ be a sequence decreasing to $0$ and $\{k_{\epsilon_{i}}\}$ be strictly increasing of positive integer numbers. Suppose that for each positive integer $i$ there is a cover $C_{i}$ of $X$ with the following properties:\\
 $(1)$ For each $i$, $C_{i}$ contains $\{k_{\epsilon_{i}}\}$ pairwise disjoint, nonempty, clopen sets which are cyclically by $\mathcal{F}_{\sigma}.$\\
 $(2)$ $C_{i+1}$ partitions $C_{i}$, for all $i\geq 1$.\\
 $(3)$ If $V_{1}\supset V_{2}\supset V_{3}\supset ...$ is a nested sequence with $V_{i}\in C_{i}$, then $\cap_{i=1}^{\infty}V_{i}$ consists of a single point.\\
 Then for every  $\sigma=\{\lambda_{1},\lambda_{2},\lambda_{3},...\}\in \Lambda^{\mathbb{Z}_{+}}$,  $\mathcal{F}_{\sigma}$ is topologically conjugate to $g_{\alpha}$ where $\alpha=\{k_{\epsilon_{1}},\frac{k_{\epsilon_{2}}}{k_{\epsilon_{1}}},\frac{k_{\epsilon_{3}}}{k_{\epsilon_{2}}},...\}$
\end{lemma}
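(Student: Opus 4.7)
The plan is to build a homeomorphism $h: X \to \Delta_\alpha$ from the nested structure of $\{C_i\}$ and then check that it conjugates every map in the sequence $\sigma$ to the adding machine. First I would label the $k_{\epsilon_i}$ sets in each $C_i$ cyclically, in a way compatible across levels. Pick any $V_1^{0} \in C_1$; by property $(1)$, each $f_{\lambda_n}$ cycles $C_1$, so the orbit $V_1^{0}, f_{\lambda_1}(V_1^{0}) \subset V_1^{1},\ldots$ gives a cyclic labeling $V_1^{0},\ldots,V_1^{k_{\epsilon_1}-1}$. At the next level, property $(2)$ lets me pick $V_2^{0} \in C_2$ with $V_2^{0} \subset V_1^{0}$, and I label the rest of $C_2$ cyclically by the $f_{\lambda_n}$'s. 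A short check shows $V_2^{j} \subset V_1^{j \bmod k_{\epsilon_1}}$, so the cyclic ordering on $C_2$ refines that on $C_1$. Iterating this construction produces compatible cyclic labelings on every $C_i$.

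Given these labelings, I define $h(x)=(x_1,x_2,\ldots)$ as follows. For each $x \in X$ and each $i$, let $a_i(x) \in \{0, \ldots, k_{\epsilon_i}-1\}$ be the unique index with $x \in V_i^{a_i(x)}$ (well-defined by disjointness). Compatibility of the labelings gives $a_{i+1}(x) \equiv a_i(x) \pmod{k_{\epsilon_i}}$, so there are unique digits $x_i \in \{0, \ldots, k_{\epsilon_i}/k_{\epsilon_{i-1}}-1\}$ (with $k_{\epsilon_0}:=1$) satisfying $a_i(x) = x_1 + x_2 k_{\epsilon_1} + \cdots + x_i k_{\epsilon_{i-1}}$. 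These are the coordinates of $h(x) \in \Delta_\alpha$, and the mixed-radix base matches $\alpha$.

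I would then verify in turn that $h$ is a bijection, a homeomorphism, and a conjugacy. Injectivity follows because $h(x)=h(y)$ forces all $a_i$'s to coincide, so $x$ and $y$ lie in the same nested chain and property $(3)$ gives $x=y$; surjectivity is the same argument in reverse, combined with property $(3)$. Continuity of $h$ uses compactness of $X$: the finitely many clopen sets of $C_i$ have a positive minimum pairwise distance, so on a small enough ball the first $i$ coordinates of $h$ are constant, which makes $d_\alpha(h(\cdot),h(\cdot))$ small. Since $X$ is compact and $\Delta_\alpha$ is Hausdorff, $h$ is automatically a homeomorphism. For the conjugacy, fix $x$ and any $n$: by the way the labels were chosen, $f_{\lambda_n}(V_i^{a_i(x)}) \subset V_i^{a_i(x)+1 \bmod k_{\epsilon_i}}$ for every $i$, so the digit sequence of $h(f_{\lambda_n}(x))$ is obtained from that of $h(x)$ by the carry-propagating ``$+(1,0,0,\ldots)$'' operation of Definition \ref{df}. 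Hence $h \circ f_{\lambda_n} = g_\alpha \circ h$ for each $n$, which yields $h \circ \mathcal{F}_{\sigma_n} = g_\alpha^n \circ h$ for all $n$.

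The main obstacle is the compatibility of the cyclic labelings across levels, since without it the mixed-radix decomposition of $a_i(x)$ into digits $x_j$ is not well-defined. This is where properties $(1)$ and $(2)$ must be used jointly: at each level the $f_{\lambda_n}$'s cycle $C_i$, and because $C_{i+1}$ refines $C_i$, a coherent choice of starting sets $V_1^{0} \supset V_2^{0} \supset \cdots$ (whose existence uses property $(3)$ to pick a thread through the cover) automatically produces compatible cyclic orderings of every $C_i$. Once this bookkeeping is in place, all remaining verifications are mechanical.
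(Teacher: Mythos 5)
Your proposal is correct and follows essentially the same route as the paper: both define $h$ by sending $x$ to the point of $\Delta_{\alpha}$ determined by the nested sequence of labeled clopen sets containing $x$, and then verify $h\circ\mathcal{F}_{\sigma_{n}}=g_{\alpha}^{n}\circ h$ from the fact that each $f_{\lambda_{n}}$ advances every level's cyclic labeling by one. The only difference is one of detail: the paper delegates the compatible labeling, continuity, and bijectivity of $h$ to Theorem 2.3 of Block and Keesling, whereas you carry out that bookkeeping (the mixed-radix digit decomposition and the compactness argument) explicitly.
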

\begin{proof}
 Take $Q_{1}=\{Y_{1,0},...,Y_{1,k_{\epsilon_{1}}}\}$ and $Q_{i}=\{Y_{i,0},...,Y_{i,\frac{k_{\epsilon_{i}}}{k_{\epsilon_{i-1}}}}\}$, for every $i>1$ as constructed in proof of Theorem 2.3. of \cite{[bk]} which $\cap_{i\geq 1}Y_{i,t_{i}}$ consists a single point, where $t_{i}\in \{0,...,\frac{k_{\epsilon_{i}}}{k_{\epsilon_{i-1}}}-1\}$, for all $i\geq 1$.\\
 We define $h:X\rightarrow \Delta_{\alpha}$ as follows:\\ Let $x\in X $. For each positive integer $i$, there is a unique $m_{i}\leq k_{\epsilon_{i}}-1$,  such that $x\in X_{i,m_{i}}$. Now $\cap_{i\geq 1}Y_{i,m_{i}}$ consists a single point $y$. Let $h(x)=y$. By proof of Theorem 2.3. of \cite{[bk]}, $h$ is continuous and bijective. Now, we show that
$h~o\mathcal{F}_{\sigma_{n}}=g_{\alpha}^{n}o~h$ for all $n\geq 1$. \\
Let $x\in X$ be arbitrary. For each positive integer $i$ there is unique $m_{i}\leq k_{\epsilon_{i}}-1$,  such that $x\in X_{i,m_{i}}$. There is a unique $t_{i}\leq k_{\epsilon_{i}}-1$ defined by $t_{i}=(m_{i}+1) mod~k_{\epsilon_{i}}$. Then each of the points $h~o\mathcal{F}_{\sigma_{n}}$ and $g_{\alpha}^{n}o~h$ is an element of $\cap_{i\geq 1}Y_{i,t_{i}}$. Since this intersection consists of a single point, we have $h~o\mathcal{F}_{\sigma_{1}}(x)=g_{\alpha}o~h(x).$
\\ Replacing $f_{\lambda_{1}}(x)$ by $x$ and and repeat the above technique,  $$h~o\mathcal{F}_{\sigma_{2}}(x)=h(f_{\lambda_{2}}(f_{\lambda_{1}}(x)))=
g_{\alpha}o~h((f_{\lambda_{1}}(x)))=g_{\alpha}(g_{\alpha}(h(x))).$$
So, by induction on $n$ we have
$$h(\mathcal{F}_{\sigma_{n}}(x))=h(f_{\lambda_{n}}(\mathcal{F}_{\sigma_{n-1}}(x)))=
g_{\alpha}(h(\mathcal{F}_{\sigma_{n-1}}(x)))=g_{\alpha}(g_{\alpha}^{n-1}(h(x)))=g_{\alpha}^{n}(h(x)).$$
\end{proof}
The following theorem is one of the main results of this paper.
\begin{theorem}\label{main}
Let $\mathcal{F}$ be a chain transitive IFS, then either\\
$a-$ there is a $k\geq 1$ such that $\mathcal{F}$ cyclically permutes $k$ closed and open equivalence classes of $X$ and $\mathcal{F}^{k}$ restricted to each equivalence classes is chain mixing; or\\
$b-$  $\mathcal{F}$ factors to an adding machine map.
\end{theorem}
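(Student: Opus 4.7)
The plan is to dichotomize on the behavior of the sequence $\{k_\epsilon\}$ as $\epsilon \to 0^+$. By Remark \ref{rea}(a), $k_{\epsilon_2}$ divides $k_{\epsilon_1}$ whenever $\epsilon_1 \leq \epsilon_2$, so $k_\epsilon$ is non-decreasing as $\epsilon$ shrinks. Hence either $k_\epsilon$ stabilizes at some finite value $k$, or it tends to infinity, and these two alternatives will correspond to conclusions (a) and (b) respectively.

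For case (a), suppose $k_\epsilon = k$ for all $\epsilon \leq \epsilon_0$. Since for $\epsilon_1 < \epsilon_2 \leq \epsilon_0$ each $\sim_{\epsilon_2}$-class is a union of $\sim_{\epsilon_1}$-classes and both partitions have exactly $k$ members, the two partitions must coincide, yielding a common clopen partition $\{E_0, \ldots, E_{k-1}\}$ independent of $\epsilon$. By Lemma \ref{lec}, these $k$ sets are cyclically permuted by every $\mathcal{F}_\sigma$ and each is invariant under every member of $\mathcal{F}^k$. To verify that $\mathcal{F}^k|_{E_i}$ is chain mixing, I would fix $x, y \in E_i$ and $\epsilon > 0$, choose $\delta > 0$ by uniform equicontinuity of the finite family $\{f_\lambda\}$ so that any $\delta$-chain of $\mathcal{F}$ of length $mk$ from $x$ to $y$ induces an $\epsilon$-chain of $\mathcal{F}^k$ of length $m$, and then invoke the defining gcd property: the set of $\delta$-chain lengths from $y$ to itself consists of multiples of $k$ whose gcd is exactly $k$. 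A standard Frobenius / Chicken McNugget argument on that numerical semigroup, combined with concatenating a fixed $\delta$-chain from $x$ to $y$ with $\delta$-loops at $y$, then produces $\delta$-chains of $\mathcal{F}$ from $x$ to $y$ of every sufficiently large length in $k\mathbb{Z}_{>0}$, and hence $\epsilon$-chains of $\mathcal{F}^k$ from $x$ to $y$ of every sufficiently large length.

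For case (b), when $k_\epsilon \to \infty$, I would extract a subsequence $\epsilon_1 > \epsilon_2 > \cdots \to 0$ with $k_{\epsilon_i}$ strictly increasing. The partitions $C_i$ of $X$ into $k_{\epsilon_i}$ clopen equivalence classes then satisfy conditions (1) and (2) of Lemma \ref{led} directly from Lemma \ref{lec} and Remark \ref{rea}(a). Even if condition (3) happens to fail, the map $h : X \to \Delta_\alpha$ defined as in the proof of Lemma \ref{led}, with $\alpha = (k_{\epsilon_1}, k_{\epsilon_2}/k_{\epsilon_1}, k_{\epsilon_3}/k_{\epsilon_2}, \ldots)$, is still continuous (each piece of every $C_i$ is clopen), surjective, and by the cycling property of the classes satisfies $h \circ \mathcal{F}_\sigma = g_\alpha \circ h$ for every $\sigma \in \Lambda^{\mathbb{Z}_+}$. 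This exhibits $g_\alpha$ as a factor of $\mathcal{F}$, establishing (b).

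I expect the main technical obstacle to be the chain-mixing step in case (a): specifically, ensuring that the translation between $\mathcal{F}$-chains of length $mk$ and $\mathcal{F}^k$-chains of length $m$ is uniform in the base point via equicontinuity of $\{f_\lambda\}$, and verifying that the attainable $\delta$-loop lengths at $y$ really generate as a numerical semigroup all sufficiently large multiples of $k$ rather than a proper sublattice. Case (b) by contrast is essentially bookkeeping on top of Lemma \ref{led}, weakened from conjugation to a factor because condition (3) is no longer assumed.
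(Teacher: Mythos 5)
Your proposal follows essentially the same route as the paper: the same dichotomy on whether $k_{\epsilon}$ stabilizes or grows without bound, with Case (a) handled by the gcd/numerical-semigroup argument of Richeson and Wiseman (which the paper merely cites as ``similar arguments to the proof of Theorem 6'') and Case (b) by the construction of Lemma \ref{led}. Your added care in Case (b) --- observing that hypothesis $(3)$ of Lemma \ref{led} need not hold and that one then obtains only a factor map rather than a conjugacy --- is in fact more accurate than the paper, which invokes the lemma without verifying $(3)$; since the theorem only asserts that $\mathcal{F}$ factors onto an adding machine, your weaker conclusion suffices.
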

\begin{proof}
Fix $\epsilon>0$. By Lemma \ref{lec}, $\mathcal{F}$ cycles among the classes periodically and each class is invariant under $\mathcal{F}_{\sigma_{k}}$, for every $\sigma\in \Lambda^{\mathbb{Z}_{+}}$. The quantity $k_{\epsilon}$ is nondecreasing as $\epsilon\rightarrow 0$ and $\epsilon_{1}\leq\epsilon_{2}$ implies that $k_{\epsilon_{2}}$ divides $k_{\epsilon_{1}}$.\\
Now we have two cases: $\{k_{\epsilon}\}_{\epsilon>0}$ is bounded or $k_{\epsilon}$ grows without bound.\\
\textbf{Case A:} $k_{\epsilon}$ stabilizes at some $k$.\\
If $k=1$ then there is only one equivalence classes and similar arguments to proof of Theorem 6 in \cite{[RW]} shows that   $\mathcal{F}$ is chain mixing. \\
If $k=k_{\epsilon_{0}}>1$ for some $\epsilon_{0}>0$, then the equivalence relation $\sim$ is the same as $\sim_{\epsilon_{0}}$. Thus there are $k$ equivalence classes  and $\mathcal{F}$ cycles among the classes periodically and each class is invariant under $\mathcal{F}_{\sigma_{k}}$, for every $\sigma\in \Lambda^{\mathbb{Z}_{+}}$ by Lemma \ref{lec}, each equivalence class is invariant under $\mathcal{F}_{\sigma_{k}}$ and similar to the case $k=1$, $\mathcal{F}_{\sigma_{k}}$ is chain mixing on each equivalence class.\\ \textbf{Case B:} Let $k_{\epsilon}$ grows without bound and $\sigma\in \Lambda^{\mathbb{Z}_{+}}$ be arbitrary. So, we have a sequence $\{\epsilon_{i}\}$ of positive numbers decreasing to $0$ and the related sequences $\{k_{\epsilon_{i}}\}$ that is strictly increasing. Then for every $\{\epsilon_{i}\}$ we have $\{k_{\epsilon_{i}}\}$ equivalence classes, and by Remark \ref{rea} and Lemma \ref{led}  $\mathcal{F}_{\sigma}$ is topologically conjugate to $g_{\alpha}$, where $\alpha=\{k_{\epsilon_{1}},\frac{k_{\epsilon_{2}}}
{k_{\epsilon_{1}}},\frac{k_{\epsilon_{3}}}{k_{\epsilon_{2}}},...\}$.
\end{proof}
Now we obtain the main result of this paper.
\begin{theorem}\label{ca}
Let $X$ be connected and $\mathcal{F}=\{X; f_{\lambda}|\lambda\in\Lambda\}$ be an IFS, consists continuous functions. Then the following are equivalent:\\
$(1)~\mathcal{F}$ is chain mixing.\\
$(2)~\mathcal{F}$ is totally chain transitive.\\
$(3)~\mathcal{F}$ is  chain transitive.\\
$(4)~\mathcal{F}$ is chain recurrent.\\
\end{theorem}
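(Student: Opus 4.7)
The plan is to prove the cycle of implications $(1) \Rightarrow (2) \Rightarrow (3) \Rightarrow (4) \Rightarrow (1)$. Three of these are immediate from results already established: $(1) \Rightarrow (2)$ is the unnumbered lemma in Section~3 stating that if $\mathcal{F}$ is chain mixing then $\mathcal{F}^n$ is chain transitive for every positive integer $n$; $(2) \Rightarrow (3)$ is the specialization to $n = 1$; and $(3) \Rightarrow (4)$ is immediate from the definitions, since chain transitivity automatically provides, for every $x \in X$ and every $\epsilon > 0$, an $\epsilon$-chain from $x$ to itself.

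The substantive direction is $(4) \Rightarrow (1)$. Starting from chain recurrence, the connectedness of $X$ activates Lemma~\ref{lea}, upgrading $(4)$ to chain transitivity, which in turn places $\mathcal{F}$ within the hypotheses of Theorem~\ref{main}. That theorem supplies a dichotomy: either $\mathcal{F}$ cyclically permutes $k \geq 1$ clopen equivalence classes with $\mathcal{F}^k$ chain mixing on each, or $\mathcal{F}$ factors onto a genuine adding machine arising from a strictly increasing sequence $\{k_{\epsilon_i}\}$. My strategy is to invoke connectedness a second time to collapse the dichotomy to its trivial branch $k = 1$, which by the proof of Theorem~\ref{main} is exactly chain mixing of $\mathcal{F}$.

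To execute this collapse, fix an arbitrary $\epsilon > 0$ and consider the partition of $X$ into the $k_\epsilon$ equivalence classes of $\sim_\epsilon$ supplied by Lemma~\ref{lec}. By Remark~\ref{rea}(b) each class is clopen, nonempty, and the classes are pairwise disjoint; connectedness of $X$ then forces the partition to have a single block, so $k_\epsilon = 1$ for every $\epsilon > 0$. Thus the sequence $\{k_\epsilon\}_{\epsilon > 0}$ stabilizes at $1$, which lands squarely in Case~A of the proof of Theorem~\ref{main} with $k = 1$, and the argument there delivers chain mixing. The point I expect to require the most care is confirming that Remark~\ref{rea}(b) really does deliver openness as well as closedness of each $\sim_\epsilon$-class, since it is the clopen property (not mere closedness) that permits connectedness to do the work of forcing $k_\epsilon = 1$; after that, the rest of the argument is a matter of threading together Lemmas~\ref{lea}, \ref{lec}, and Theorem~\ref{main} in the correct order.
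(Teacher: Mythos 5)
Your proposal is correct and follows essentially the same route as the paper: the implications $(1)\Rightarrow(2)\Rightarrow(3)\Rightarrow(4)$ are immediate from the definitions, $(4)\Rightarrow(3)$ is Lemma~\ref{lea}, and $(3)\Rightarrow(1)$ comes from Theorem~\ref{main} together with connectedness. Your explicit observation that connectedness forces $k_\epsilon=1$ (since the $\sim_\epsilon$-classes are pairwise disjoint nonempty clopen sets) usefully spells out the detail the paper leaves implicit when it invokes Theorem~\ref{main}.
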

\begin{proof}
By definitions this is clear that $(1)\Rightarrow (2)\Rightarrow (3)\Rightarrow (4).$ Since $X$ is connected, by Theorem \ref{main} we have $(3)\Rightarrow (1)$ and by  Lemma \ref{lea} $(4)\Rightarrow (3)$, so the proof is complete.
\end{proof}
Lemma \ref{le} and Theorem \ref{ca} implies the following result.
\begin{corollary}
Let $X$ be connected and $\mathcal{F}=\{X; f_{\lambda}|\lambda\in\Lambda\}$ be an IFS, consists continuous functions which has the shadowing property. Then the following are equivalent:\\
$(1)~\mathcal{F}$ is topologically mixing.\\
$(2)~\mathcal{F}$ is totally chain transitive.\\
$(3)~\mathcal{F}$ is  chain transitive.\\
$(4)~\mathcal{F}$ is chain recurrent.\\
\end{corollary}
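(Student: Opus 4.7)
The plan is to reduce the corollary directly to the two results already proved: Lemma \ref{le}, which (under the shadowing assumption) identifies topological mixing with chain mixing, and Theorem \ref{ca}, which (under connectedness) identifies chain mixing with each of totally chain transitive, chain transitive, and chain recurrent. No new construction is needed; the only work is to chain these two equivalences.

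First, I would verify that the hypotheses of Theorem \ref{ca} hold: $X$ is connected and each $f_{\lambda}$ is continuous, which are precisely the assumptions of the corollary. Applying Theorem \ref{ca} immediately gives
\[
\mathcal{F} \text{ chain mixing} \iff (2) \iff (3) \iff (4).
\]
At this point the statements $(2),(3),(4)$ are already mutually equivalent, and each is equivalent to chain mixing.

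Second, I would invoke Lemma \ref{le}, whose hypothesis — that $\mathcal{F}$ possesses the shadowing property — is exactly the extra assumption appearing in the corollary. Lemma \ref{le} yields
\[
\mathcal{F} \text{ topologically mixing} \iff \mathcal{F} \text{ chain mixing}.
\]
Combining this with the equivalence obtained from Theorem \ref{ca} inserts $(1)$ into the cycle: $(1) \iff \mathcal{F}$ chain mixing $\iff (2) \iff (3) \iff (4)$, which is exactly the statement of the corollary.

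The main (and essentially only) obstacle is bookkeeping: making sure the two invoked results apply under the combined hypotheses. Theorem \ref{ca} does not require shadowing, so it works unchanged; Lemma \ref{le} does not require connectedness, so it also works unchanged; hence the two equivalences are simply concatenated. There is no subtlety beyond checking that the hypotheses of each auxiliary result are satisfied, so the proof reduces to a short two-sentence citation, mirroring what the excerpt already writes after the corollary's statement.
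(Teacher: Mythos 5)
Your proposal is correct and matches the paper's own argument exactly: the paper likewise derives the corollary by concatenating Lemma \ref{le} (topological mixing $\iff$ chain mixing under shadowing) with Theorem \ref{ca} (chain mixing $\iff$ (2) $\iff$ (3) $\iff$ (4) under connectedness). Your hypothesis bookkeeping is the only content needed, and it is handled correctly.
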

Lemma \ref{lex} and Theorem \ref{ca} implies the following result.
\begin{corollary}
Let $X$ be connected and $\mathcal{F}=\{X; f_{\lambda}|\lambda\in\Lambda\}$ be an IFS, consists continuous functions and that $ \mathcal{F}^{n}$ is chain transitive, for all positive integers $n$. Then we have the following properties:\\
$(1)~\mathcal{F}\times  \mathcal{F}$ is chain  mixing.\\
$(2)~\mathcal{F}\times  \mathcal{F}$ is totally chain transitive.\\
$(3)~\mathcal{F}\times  \mathcal{F}$ is chain recurrent.\\
\end{corollary}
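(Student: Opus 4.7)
The plan is to chain together Lemma \ref{lex} and Theorem \ref{ca}, the latter applied to the product IFS $\mathcal{F}\times\mathcal{F}$ on the product space $X\times X$. The hypothesis that $\mathcal{F}^n$ is chain transitive for all $n\geq 1$ is exactly the hypothesis of Lemma \ref{lex}, so the first step is to invoke that lemma to conclude that $\mathcal{F}\times\mathcal{F}$ is chain transitive on $X\times X$.

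Next I would verify the hypotheses needed to feed $\mathcal{F}\times\mathcal{F}$ into Theorem \ref{ca}. The product space $X\times X$ is connected because $X$ is connected and the product of connected spaces is connected. Each map $\Phi_{\lambda_1,\lambda_2}(x,y)=(f_{\lambda_1}(x),f_{\lambda_2}(y))$ is continuous with respect to the product metric $D$ because each coordinate is continuous; the index set $\Lambda\times\Lambda$ is still finite, so $\mathcal{F}\times\mathcal{F}$ is a legitimate IFS on the connected compact Hausdorff metric space $X\times X$.

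With chain transitivity of $\mathcal{F}\times\mathcal{F}$ established on the connected space $X\times X$, I would then apply Theorem \ref{ca} directly. The equivalence (3)$\Leftrightarrow$(1) gives that $\mathcal{F}\times\mathcal{F}$ is chain mixing, which yields conclusion (1). The equivalence (3)$\Leftrightarrow$(2) gives that $\mathcal{F}\times\mathcal{F}$ is totally chain transitive, yielding conclusion (2). Finally, (3)$\Leftrightarrow$(4) (or just the trivial implication chain mixing $\Rightarrow$ chain recurrent) gives conclusion (3).

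No serious obstacle is expected: the only delicate points are a one-line verification that $D$-connectedness of $X\times X$ follows from connectedness of $X$, and an equally short check that the product IFS fits the framework of Theorem \ref{ca}. Both Lemma \ref{lex} and Theorem \ref{ca} do all the heavy lifting, so this corollary is essentially a bookkeeping exercise linking them.
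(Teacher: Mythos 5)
Your proposal is correct and is exactly the route the paper takes: the paper derives this corollary by combining Lemma \ref{lex} (to get chain transitivity of $\mathcal{F}\times\mathcal{F}$) with Theorem \ref{ca} applied to the product IFS on the connected space $X\times X$. Your additional checks that $X\times X$ is connected and that the product system is a legitimate IFS are the right bookkeeping details, which the paper leaves implicit.
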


\end{document}